\topskip \setlength{\parindent}{0pt} \setlength{\parskip}{5pt plus
\numberwithin{equation}{section}
\newtheorem{theorem}{Theorem}[section]
\newtheorem{corollary}[theorem]{Corollary}
\newtheorem{lemma}[theorem]{Lemma}
\begin{document}

\pagenumbering{arabic}
\pagestyle{headings}
\def\sof{\hfill\rule{2mm}{2mm}}
\def\ls{\leq}
\def\gs{\geq}
\def\SS{\mathcal S}
\def\qq{{\bold q}}
\def\txx{{\frac1{2\sqrt{x}}}}

\title{Smooth words and Chebyshev polynomials}
\author[A. Knopfmacher]{Arnold Knopfmacher}

\address{The John Knopfmacher Centre for Applicable Analysis and Number Theory, School of Mathematics, University of the Witwatersrand,
Johannesburg, South Africa.} \email{Arnold.Knopfmacher@ wits.ac.za}
\author[T. Mansour]{Toufik Mansour}
\address{Department of Mathematics, University of Haifa, 31905 Haifa,
Israel.} \email{toufik@math.haifa.ac.il}

\author[A. Munagi]{Augustine Munagi}
\address{The John Knopfmacher Centre for Applicable Analysis and Number Theory, School of Mathematics, University of the Witwatersrand,
Johannesburg, South Africa.} \email{Augustine.Munagi@wits.ac.za}
\author[H.~Prodinger]{Helmut Prodinger}
\address{Department of Mathematics\\
University of Stellenbosch\\
7602 Stellenbosch\\
South Africa}
\email{hproding@sun.ac.za}

\maketitle

\begin{abstract}
A word $\sigma=\sigma_1\cdots\sigma_n$ over the  alphabet $[k]=\{1,2,\ldots,k\}$ is said to be {\em smooth} if there are no two adjacent letters with difference greater than $1$.
A word $\sigma$ is said to be {\em smooth cyclic} if it is a smooth word and in addition satisfies $|\sigma_n-\sigma_1|\le 1$.
We find the explicit generating functions for the number of smooth words and cyclic smooth words in  $[k]^n$, in terms of {\it Chebyshev polynomials of the second kind}.
Additionally, we find explicit formula for the numbers themselves, as trigonometric sums. These lead to  immediate asymptotic corollaries. We also enumerate smooth necklaces, which are cyclic smooth words that are not equivalent
up to rotation.
\end{abstract}

2000 Mathematics Subject Classification: 68R05, 05A05, 05A15, 05A16\\

Keywords: smooth word, smooth cyclic word, smooth necklace, Chebyshev polynomial, generating function, tridiagonal matrix.
\section{Introduction}
Let $[k]^n$ be the set of all words of length $n$ over the alphabet $[k]=\{1,2,\ldots,k\}$.
In the past decade, many research papers have been devoted to the study of enumeration problems on the set $[k]^n$. For instance, the enumeration of words which contain a prescribed number of a given set of strings as substrings is a classical problem in combinatorics. This problem can, for example, be attacked using the transfer matrix method, see \cite[Section 4.7]{St1} and \cite{GJ}.
R\'egnier and Szpankowski \cite{RS} used a combinatorial approach to study the frequency of occurrences of certain
strings (which they also call a ``pattern") in a random word, where overlapping copies of the strings are counted separately.
Burstein and Mansour \cite{BM} have considered the enumeration of elements of $[k]^n$ that satisfy certain restrictions, where the restrictions may be characterized in terms of pattern avoidance.

In this paper we consider the enumeration of a special class of words, namely the {\it smooth words}. Intuitively the name ``smooth" indicates that one can move between adjacent letters of a word with at most a very slight ``bump."
 The precise definition follows.

A word $\sigma\in [k]^n$ is said to be {\em smooth} if it avoids a (contiguous) string of the form $ij$ where $|i-j|>1$.
For example, there are $7$ smooth words in $[3]^2$, namely $11$, $12$, $21$, $22$, $23$, $32$ and $33$.
 A word $\sigma$ is said to be {\em smooth cyclic} if it is a smooth word and in addition satisfies $\sigma_n-\sigma_1\in\{0,1,-1\}$. Clearly, each word in $[k]^n$, $k=1,2$, is smooth cyclic (and thus also smooth).
 We denote the number of smooth words (respectively, smooth cyclic words) in $[k]^n$ by $\textit{sw}_{n,k}$ (respectively, $\textit{scw}_{n,k}$). Table~\ref{tab1} shows the numbers of smooth words and smooth cyclic words of length
 $n$ over the alphabet $[k]$ for $0\le n\le 11$ and $3\le k\le 7$.

\begin{table}[htp]
\begin{tabular}{c|cccccccccccc}
  $n$        & 0 & 1 & 2 & 3 & 4 & 5 & 6 & 7  & 8  & 9  & 10  & 11     \\\hline\hline
  $\textit{sw}_{n,3}$& 1 & 3 & 7 & 17& 41& 99&239&577 &1393&3363&8119 &19601  \\
  $\textit{scw}_{n,3}$& 1 &3 & 7 & 15& 35& 83&199&479 &1155&2787&6727 &16239  \\\hline
  $\textit{sw}_{n,4}$& 1 & 4 &10 & 26& 68&178&466&1220&3194&8362&21892&57314\\
  $\textit{scw}_{n,4}$& 1 &4 &10 & 22& 54&134&340&872 &2254&5854&15250&39802\\\hline
  $\textit{sw}_{n,5}$& 1 & 5& 13& 35& 95& 259& 707& 1931& 5275& 14411& 39371& 107563\\
  $\textit{scw}_{n,5}$& 1 &5& 13& 29& 73& 185& 481& 1265& 3361& 8993 & 24193& 65345\\\hline
  $\textit{sw}_{n,6}$& 1 & 6& 16& 44& 122& 340& 950& 2658& 7442& 20844& 58392& 163594\\
  $\textit{scw}_{n,6}$& 1 &6& 16& 36& 92 & 236& 622& 1658& 4468& 12132& 33146& 90998\\\hline
  $\textit{sw}_{n,7}$& 1 & 7& 19& 53& 149& 421& 1193& 3387& 9627& 27383& 77923& 221805\\
  $\textit{scw}_{n,7}$& 1 & 7 & 19& 43& 111& 287& 763& 2051 & 5575& 15271& 42099& 116651\\\hline
\end{tabular}\vspace{7pt}
\caption{Numbers of smooth words and smooth cyclic words $\textit{sw}_{n,k}, \textit{scw}_{n,k}$.}\label{tab1}
\end{table}

We will find the explicit generating functions for $\textit{sw}_{n,k}$ and $\textit{scw}_{n,k}$ in terms of {\it Chebyshev polynomials of the second kind}.
Additionally, we find explicit formul{\ae} for the numbers themselves, in terms of certain trigonometric expressions. They allow for immediate asymptotic corollaries.

Chebyshev polynomials of the second kind are defined by
            $$U_r(\cos\theta)=\frac{\sin(r+1)\theta}{\sin\theta}$$
for $r\gs0$. Evidently, $U_r(x)$ is a polynomial of degree $r$ in $x$ with integer coefficients. For example, $U_0(x)=1$, $U_1(x)=2x$, $U_2(x)=4x^2-1$, and in general,
\begin{equation}\label{eqcheb}
U_r(x)=2xU_{r-1}(x)-U_{r-2}(x).
\end{equation}
{\em Chebyshev polynomials of the first kind} are defined by $T_r(\cos\theta)=\cos(r\theta)$ which is equivalent to
$T_r(x)=\frac{1}{2}(U_r(x)-U_{r-2}(x))$.  Chebyshev polynomials were invented for the needs of approximation theory, but are also widely used in various other branches of mathematics, including algebra, combinatorics, and number theory (see \cite{Ri}).

Two words $\sigma=\sigma_1\cdots\sigma_n$ and $\pi=\pi_1\cdots\pi_n$ in $[k]^n$ are said to be {\em rotation equivalent} if there exists an index $i,\ 1\leq i\leq n$ such that $\pi_i\pi_{i+1}\cdots\pi_n\pi_1\pi_2\cdots\pi_{i-1}=\sigma$.
 For example, the words $122$, $212$ and $221$ are rotation equivalent. The set of {\em necklaces} of length $n$ over the alphabet $[k]$ is the set of words in $[k]^n$ up to the rotation-equivalence.
For example, if $k=2$ and $n=3$ there are $4$ necklaces, namely, $111$, $122$ ($212$ and $221$ are rotation equivalent), $112$ ($121$ and $211$ are rotation equivalent) and $222$. Using our results on smooth cyclic words, we also determine the number of smooth necklaces in $[k]^n$.

The paper is organized as follows. In Section \ref{smooth} we obtain the generating function for the number $\textit{sw}_{n,k}$ of smooth words, followed shortly by the explicit enumeration formula (Theorems \ref{th1} and \ref{th11}).
The asymptotic growth rate is also obtained in this section. In Section \ref{smoothcyc} we obtain the corresponding enumeration results for smooth cyclic words. Lastly, Section \ref{smoothneck} deals with the enumeration of smooth necklaces.

\section{Enumeration of Smooth words}\label{smooth}
Let $\textit{sw}_k(x)$ denote the generating function for the number of smooth words over $[k]$:
\[\textit{sw}_k(x)=\sum_{n\ge 0} \textit{sw}_{n,k}x^n.\]
In order to obtain a formula for $\textit{sw}_k(x)$, we introduce the following notations. Let $\textit{sw}_k(x\mid i_1i_2\cdots i_s)$ be the generating function for the number of smooth words $\sigma_1\cdots\sigma_n$ of length $n$ over the alphabet $[k]$ such that $\sigma_1\cdots\sigma_s=i_1\cdots i_s$.

\begin{lemma}\label{lem1}
The generating function $\textit{sw}_k(x\mid i)$ satisfies
$$\textit{sw}_k(x\mid i)=x+x\bigl(\textit{sw}_k(x\mid i-1)+\textit{sw}_k(x\mid i)+\textit{sw}_k(x\mid i+1)\bigr),$$
for all $1\leq i\leq k$, where $\textit{sw}_k(x\mid i)=0$ if $i\not\in[k]$.
\end{lemma}
\begin{proof}
Let $\sigma$ be any nonempty smooth word. If $\sigma$ contains exactly one letter then $\sigma_1\in[k]$. Otherwise, the second letter of $\sigma=i\sigma_2\cdots\sigma_{n}$ is either $i-1,i$ or $i+1$. Thus, in terms of generating functions we have that
  $$\textit{sw}_k(x\mid i)=x+x\bigl(\textit{sw}_k(x\mid i-1)+\textit{sw}_k(x\mid i)+\textit{sw}_k(x\mid i+1)\bigr),$$
for all $1\leq i\leq k$, which completes the proof.
\end{proof}

Rewriting Lemma~\ref{lem1} as a matrix system we obtain
\begin{equation}\label{sys1}
\textbf{A}\begin{pmatrix}\textit{sw}_k(x\mid 1)\\\vdots\\\textit{sw}_k(x\mid k)\end{pmatrix} =\begin{pmatrix}1\\\vdots\\1\end{pmatrix}x,
\end{equation}
where
$\textbf{A}=(a_{ij})$ is a $k\times k$ matrix defined by $a_{ii}=1-x$, $a_{i(i+1)}=a_{(i+1)i}=-x$, and $a_{ij}=0$ for all $|i-j|>1$. Clearly, $\textbf{A}$ is a tridiagonal matrix.

Applying a result of Usmani~\cite{Usm}\footnote{Equivalent results have been published earlier, for instance in \cite{PaPr85}.} on the inversion of $\textbf{A}$ we get
$$(\textbf{A}^{-1})_{ij}=\begin{cases}
x^{j-i}\theta_{i-1}\theta_{k-j}/\theta_k& i\leq j,\\[4pt]
x^{i-j}\theta_{j-1}\theta_{k-i}/\theta_k& i>j,
\end{cases}$$
where $\theta_i$ satisfies the recurrence relation $\theta_i=(1-x)\theta_{i-1}-x^2\theta_{i-2}$, with the initial conditions $\theta_0=1$ and $\theta_1=1-x$. It follows from \eqref{eqcheb} that the solution is given by $\theta_i=x^iU_i\left(\frac{1-x}{2x}\right)$. Consequently
\begin{equation}\label{invA}
(\textbf{A}^{-1})_{ij}=
\begin{cases}
\dfrac{U_{i-1}\left(\frac{1-x}{2x}\right)U_{k-j}\left(\frac{1-x}{2x}\right)}
{xU_k\left(\frac{1-x}{2x}\right)}& i\leq j,\\[14pt]
\dfrac{U_{j-1}\left(\frac{1-x}{2x}\right)U_{k-i}\left(\frac{1-x}{2x}\right)}{xU_k\left(\frac{1-x}{2x}\right)}& i>j.
\end{cases}
\end{equation}
Thus the solution of \eqref{sys1} is
$$\begin{pmatrix}\textit{sw}_k(x\mid 1)\\\vdots\\\textit{sw}_k(x\mid k)\end{pmatrix} =\textbf{A}^{-1}\begin{pmatrix}1\\\vdots\\1\end{pmatrix}x.$$

This implies that the generating function $\textit{sw}_k(x\mid i)$ is given by
$$\textit{sw}_k(x\mid i)=\frac{1}{U_k(t)}\biggl[U_{k-i}(t)\sum_{j=0}^{i-2}U_j(t)+U_{i-1}(t)\sum_{j=0}^{k-i}U_j(t)\biggr],$$
where $t=\frac{1-x}{2x}$.

In order to simplify the right-hand side we use the identity
\begin{equation}\label{eqid1}
\sum_{j=0}^pU_j(t)=\frac{U_{p+1}(t)-U_{p}(t)-1}{2(t-1)},
\end{equation}
which may be proved easily from the fact that
$$\sum_{j=0}^{p}\sin(jt)=\frac{\sin((p+1)t)(\cos(t)-1)+\sin(t)\cos((p+1)t)-\sin(t)}{2(\cos(t)-1)}$$
and $T_n(x)=\frac{1}{2}(U_n(x)-U_{n-2}(x))$.%
\footnote{The verification of such identities is \emph{a priori} trivial and can be done by a computer,
since, upon rewriting the trigonometric functions via Euler's formul{\ae}, one only has to sum some finite geometric series.}

Thus
$$\textit{sw}_k(x\mid i)=\frac{1}{2(t-1)U_k(t)}\left[U_{i-1}(t)U_{k+1-i}(t)-U_{k-i}(t)U_{i-2}(t)-U_{k-i}(t)-U_{i-1}(t)\right].$$
Now apply the identity
\begin{equation}\label{eqid2}
U_i(t)U_j(t)=\frac{U_{i-j}(t)-tU_{i-j-1}(t)-U_{i+j+2}(t)+tU_{i+j+1}(t)}{2(1-t^2)},
\end{equation}
to obtain
\begin{equation*}
\textit{sw}_k(x\mid i)=
\frac{1}{2(t-1)U_k(t)}\left[\frac{U_{k}(t)-tU_{k-1}(t)-U_{k+2}(t)+tU_{k+1}(t)}{2(1-t^2)}-U_{k-i}(t)-U_{i-1}(t)\right],
\end{equation*}
which, by ~\eqref{eqcheb}, is equivalent to
\begin{equation}\label{eqiA}
\textit{sw}_k(x\mid i)=\frac{1}{2(t-1)U_k(t)}\left[U_k(t)-U_{k-i}(t)-U_{i-1}(t)\right].
\end{equation}

Now, using $\textit{sw}_k(x)=1+\sum_{i=1}^k\textit{sw}_k(x\mid i)$ and again ~\eqref{eqid1}, we obtain an explicit formula for the generating function $\textit{sw}_k(x)$.

\begin{theorem}\label{th1}
The generating function $\textit{sw}_k(x)$ for the number of smooth words of length $n$ over the alphabet $[k]$ is given by
$$\textit{sw}_k(x)=1+\frac{x(k-(3k+2)x)}{(1-3x)^2}+\frac{2x^2}{(1-3x)^2}\frac{1+U_{k-1}\left(\frac{1-x}{2x}\right)}{U_k\left(\frac{1-x}{2x}\right)}.$$
\end{theorem}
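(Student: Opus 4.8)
The plan is to substitute the closed form \eqref{eqiA} for each $\textit{sw}_k(x\mid i)$ into the relation $\textit{sw}_k(x)=1+\sum_{i=1}^k\textit{sw}_k(x\mid i)$ and carry out the resulting sum in closed form. Writing $t=\frac{1-x}{2x}$ throughout, the prefactor $\frac{1}{2(t-1)U_k(t)}$ is independent of $i$, so the only real work is to evaluate
$$\sum_{i=1}^k\bigl[U_k(t)-U_{k-i}(t)-U_{i-1}(t)\bigr].$$

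First I would observe that the two Chebyshev sums coincide after reindexing: as $i$ runs from $1$ to $k$, both $\sum_{i=1}^k U_{k-i}(t)$ and $\sum_{i=1}^k U_{i-1}(t)$ equal $\sum_{j=0}^{k-1}U_j(t)$. Hence the bracketed sum collapses to $kU_k(t)-2\sum_{j=0}^{k-1}U_j(t)$. Applying identity \eqref{eqid1} with $p=k-1$ evaluates the partial sum as $\frac{U_k(t)-U_{k-1}(t)-1}{2(t-1)}$, so that
$$\textit{sw}_k(x)=1+\frac{1}{2(t-1)U_k(t)}\left[kU_k(t)-\frac{U_k(t)-U_{k-1}(t)-1}{t-1}\right].$$

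The remaining step is purely algebraic: return from $t$ to $x$ using $t-1=\frac{1-3x}{2x}$, which gives $\frac{1}{2(t-1)}=\frac{x}{1-3x}$ and $\frac{1}{2(t-1)^2}=\frac{2x^2}{(1-3x)^2}$. Splitting the expression into the term linear in $k$ and the term carrying the Chebyshev quotient, and writing $\frac{U_k-U_{k-1}-1}{U_k}=1-\frac{1+U_{k-1}}{U_k}$, the non-Chebyshev part becomes $\frac{kx}{1-3x}-\frac{2x^2}{(1-3x)^2}$, which combines over the common denominator $(1-3x)^2$ into $\frac{x(k-(3k+2)x)}{(1-3x)^2}$; the Chebyshev part is exactly $\frac{2x^2}{(1-3x)^2}\cdot\frac{1+U_{k-1}(t)}{U_k(t)}$. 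This yields the stated formula.

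There is no genuine obstacle here: the theorem is a direct consequence of the work already done to produce \eqref{eqiA}. The only points demanding care are the reindexing that shows the two Chebyshev sums are equal (which is what forces the coefficient $2$ in front of the partial sum), and the final collapse of the rational part to $x(k-(3k+2)x)/(1-3x)^2$. A sign error in either place would spoil the match with the tabulated values of $\textit{sw}_{n,k}$, which therefore provides a convenient numerical sanity check.
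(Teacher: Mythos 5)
Your proof is correct and takes essentially the same route as the paper: the paper's own (compressed) argument is precisely to sum \eqref{eqiA} over $i$, apply identity \eqref{eqid1} once more to the resulting partial sums $\sum_{j=0}^{k-1}U_j(t)$, and then translate back from $t$ to $x$. Your write-up simply supplies the reindexing and final algebra that the paper leaves to the reader, and both check out.
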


It is easy to see that each word in $[k]^n$, $k=1,2$, is a smooth word. For small values of $k$, Theorem \ref{th1} gives
\begin{itemize}
\item $\textit{sw}_3(x)=\frac{1+x}{1-2x-x^2}$, that is, the number of smooth words in $[3]^n$ is given by
$$\frac{1}{2}\bigl(1+\sqrt{2}\,\bigr)^{n+1}+\frac{1}{2}\bigl(1-\sqrt{2}\,\bigr)^{n+1}.$$

\item $\textit{sw}_4(x)=\frac{1+x-x^2}{1-3x+x^2}$, that is, the number of smooth words in $[4]^n$ is given by
$$\frac{2}{\sqrt{5}}\biggl(\frac{1+\sqrt{5}}{2}\,\biggr)^{2n+1}
-\frac{2}{\sqrt{5}}\biggl(\frac{1-\sqrt{5}}{2}\,\biggr)^{2n+1}=2F_{2n+1}.$$

\item $\textit{sw}_5(x)=\frac{1+2x-2x^2-2x^3}{(1-x)(1-2x-2x^2)}$, that is, the number of smooth words in $[5]^n$ is given by
$$\frac{2+\sqrt{3}}{6}\bigl(1+\sqrt{3}\,\bigr)^{n+1}
+\frac{2-\sqrt{3}}{6}\bigl(1-\sqrt{3}\,\bigr)^{n+1}
+\frac{1}{3}.$$
\end{itemize}

In order to obtain an explicit formula for the number of smooth words of length $n$ over the alphabet $[k]$ we need the following lemma.

\begin{lemma}\label{lem1u}
Let $m\geq1$. Then
$$\frac{1}{U_m(x)}=\frac{1}{m+1}\sum_{j=1}^m\frac{(-1)^{j+1}\sin^2\big(\frac{j\pi}{m+1}\big)}{x-\cos\big(\frac{j\pi}{m+1}\big)}$$
and
$$\frac{1+U_{m-1}(x)}{U_{m}(x)}=
\frac{1}{m+1}\sum_{j=1}^m\frac{(1+(-1)^{j+1})\sin^2\left(\frac{j\pi}{m+1}\right)}{x-\cos\big(\frac{j\pi}{m+1}\big)}.$$
\end{lemma}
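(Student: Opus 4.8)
The plan is to read both identities as partial fraction expansions of proper rational functions having $U_m(x)$ as denominator. The essential inputs are the locations of the zeros of $U_m$ together with the value of $U_m'$ there, and, for the second identity, the value of $U_{m-1}$ at those zeros.

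First I would use the definition $U_m(\cos\theta)=\sin((m+1)\theta)/\sin\theta$ to locate the zeros of $U_m$: they are exactly $x_j=\cos\left(\frac{j\pi}{m+1}\right)$ for $j=1,\ldots,m$, since these make $\sin((m+1)\theta)$ vanish while keeping $\sin\theta\neq 0$. These are $m$ distinct numbers and $U_m$ has degree $m$, so each zero is simple. Consequently both $1/U_m(x)$ and $(1+U_{m-1}(x))/U_m(x)$ — the latter proper because $\deg(1+U_{m-1})=m-1<m$ — admit expansions $\sum_{j=1}^m \frac{c_j}{x-x_j}$ with no polynomial part, where the residue at the simple pole $x_j$ is $\frac{N(x_j)}{U_m'(x_j)}$, $N$ denoting the respective numerator.

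The computational heart of the argument, and the step I expect to be the main obstacle, is evaluating $U_m'(x_j)$. I would differentiate $U_m(\cos\theta)=\sin((m+1)\theta)/\sin\theta$ with respect to $\theta$ and invoke the chain rule $\frac{d}{d\theta}U_m(\cos\theta)=-\sin\theta\,U_m'(\cos\theta)$. Setting $\theta_j=\frac{j\pi}{m+1}$, where $\sin((m+1)\theta_j)=0$ and $\cos((m+1)\theta_j)=(-1)^j$, the quotient rule collapses to a single surviving term and yields $U_m'(x_j)=(-1)^{j+1}(m+1)/\sin^2\theta_j$. Inverting gives the residue $1/U_m'(x_j)=(-1)^{j+1}\sin^2\theta_j/(m+1)$, which is precisely the coefficient in the first identity.

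For the second identity I would additionally compute $U_{m-1}(x_j)$. From $U_{m-1}(\cos\theta_j)=\sin(m\theta_j)/\sin\theta_j$ together with $m\theta_j=j\pi-\theta_j$, the sine addition formula gives $\sin(m\theta_j)=(-1)^{j+1}\sin\theta_j$, hence $U_{m-1}(x_j)=(-1)^{j+1}$. The residue is then $(1+(-1)^{j+1})/U_m'(x_j)=(1+(-1)^{j+1})(-1)^{j+1}\sin^2\theta_j/(m+1)$, and a one-line check splitting into $j$ odd and $j$ even shows $(1+(-1)^{j+1})(-1)^{j+1}=1+(-1)^{j+1}$ (both equal $2$ for $j$ odd and $0$ for $j$ even). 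This reduces the coefficient to the claimed $(1+(-1)^{j+1})\sin^2\theta_j/(m+1)$ and completes the proof. Everything apart from the derivative evaluation is routine, so I would present that evaluation in full and treat the rest as bookkeeping.
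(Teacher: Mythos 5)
Your proposal is correct and takes essentially the same route as the paper: partial fraction decomposition of $1/U_m(x)$ over the simple zeros $\cos\bigl(\frac{j\pi}{m+1}\bigr)$, with the residues obtained from $U_m'$ evaluated via the chain rule through $\theta$, yielding $U_m'(x_j)=\frac{(m+1)(-1)^{j+1}}{\sin^2\theta_j}$ exactly as in the paper's computation. The only difference is that you carry out the second identity explicitly (computing $U_{m-1}(x_j)=(-1)^{j+1}$ and using $(1+(-1)^{j+1})(-1)^{j+1}=1+(-1)^{j+1}$), a detail the paper dismisses with ``Similarly, the second one can be obtained.''
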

\begin{proof}
Let us compute the partial fraction decomposition of $\frac{1}{U_m(x)}$.
By general principles, it is
$\sum_{j=1}^m\frac{A_{m,j}}{x-\rho_{m,j}}$,
where $\rho_{m,j}=\cos\bigl(\frac{j\pi}{m+1}\bigr)$ are the zeros of the $m$-th Chebyshev polynomials of the second kind. Now, $A_{m,j}=\frac1{U'_m(\rho_{m,j})}$ and note that
\begin{equation*}
U'_m(x)=\frac{dU_m(x)}{dx}=\frac d{d\theta}\Big(\frac{\sin(m+1)\theta}{\sin \theta}\Big)\cdot\frac{d\theta}{dx}.
\end{equation*}
We work out that
\begin{equation*}
\frac{dU_m}{d\theta}=\frac{(m+1)\cos(m+1)\theta\cdot \sin\theta-\sin(m+1)\theta\cdot \cos\theta}{\sin^2\theta},
\end{equation*}
and if we plug in $x=\rho_{m,j}$ simplification occurs, since certain terms are just zero; we obtain that
\begin{align*}
\frac{dU_m}{d\theta}(\arccos\rho_{m,j})&=\frac{(m+1)\cos(m+1)\theta\cdot \sin\theta-
\sin(m+1)\theta\cdot \cos\theta}{\sin^2\theta}\bigg|_{\theta=\frac{\pi j}{m+1}}\\
&=\frac{(m+1)\cos(\pi j)\cdot \sin\frac{\pi j}{m+1}-
\sin(\pi j)\cdot \cos\frac{\pi j}{m+1}}{\sin^2\frac{\pi j}{m+1}}\\
&=\frac{(m+1)\cos(\pi j)\cdot \sin\frac{\pi j}{m+1}}{\sin^2\frac{\pi j}{m+1}}\\
&=\frac{(m+1)(-1)^j}{\sin\frac{\pi j}{m+1}}.
\end{align*}
Further $\frac{dx}{d\theta}=-\sin\theta$, so together $\frac{1}{A_{m,j}}=U'_m(\rho_{m,j})=\frac{(m+1)(-1)^{j+1}}{\sin^2\frac{\pi j}{m+1}}$, which completes the proof of the first identity. Similarly, the second one can be obtained.
\end{proof}

Now we are ready to obtain an explicit formula for the number of smooth words.

\begin{theorem}\label{th11}
The number of smooth words of length $n$ over the alphabet $[k]$ is given by
\begin{equation*}
\textit{sw}_{n,k}=\frac{1}{k+1}\sum_{j=1}^k(1+(-1)^{j+1})\cot^2\frac{j\pi}{2(k+1)}
\Big(1+2\cos\frac{j\pi}{k+1}\Big)^{n-1},
\end{equation*}
or alternatively as
\begin{equation*}
\textit{sw}_{n,k}=\frac{2}{k+1}\sum_{0\le j\le\frac{k-1}{2}}\cot^2\frac{(2j+1)\pi}{2(k+1)}
\Big(1+2\cos\frac{(2j+1)\pi}{k+1}\Big)^{n-1}.
\end{equation*}

\end{theorem}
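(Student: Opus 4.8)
The plan is to read off $[x^n]$ from the generating function of Theorem~\ref{th1} after expanding its Chebyshev quotient by Lemma~\ref{lem1u}. Set $t=\frac{1-x}{2x}$ and abbreviate $\alpha_j=1+2\cos\frac{j\pi}{k+1}$. The key elementary observation is that $t-\cos\frac{j\pi}{k+1}=\frac{1-\alpha_j x}{2x}$, so that
$$\frac{1}{t-\cos\frac{j\pi}{k+1}}=\frac{2x}{1-\alpha_j x}.$$
Applying the second formula of Lemma~\ref{lem1u} with $m=k$ to the factor $\frac{1+U_{k-1}(t)}{U_k(t)}$ in Theorem~\ref{th1} and substituting this identity turns $\textit{sw}_k(x)$ into an explicit rational function of $x$, namely
$$\textit{sw}_k(x)=1+\frac{x(k-(3k+2)x)}{(1-3x)^2}+\frac{4}{k+1}\sum_{j=1}^k(1+(-1)^{j+1})\sin^2\tfrac{j\pi}{k+1}\,\frac{x^3}{(1-3x)^2(1-\alpha_j x)}.$$
Its only candidate singularities are a double pole at $x=\frac13$ and simple poles at $x=1/\alpha_j$.

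The one delicate point, and the crux of the whole argument, is to show that $x=\frac13$ is removable, so that no $3^n$ or $n\,3^n$ term appears. I would establish this by a local expansion about $x=\frac13$, i.e.\ about $t=1$. From $U_k(1)=k+1$, $U_{k-1}(1)=k$ and the derivative value $U_r'(1)=\frac{r(r+1)(r+2)}{3}$ one computes $\frac{1+U_{k-1}(t)}{U_k(t)}=1-k(t-1)+O((t-1)^2)$. Writing $u=1-3x$, so that $t-1=\frac{3u}{2(1-u)}$ and $\frac{2x^2}{(1-3x)^2}=\frac{2}{9u^2}-\frac{4}{9u}+O(1)$, the third summand contributes $\frac{2}{9u^2}-\frac{3k+4}{9u}+O(1)$, while the second summand contributes $-\frac{2}{9u^2}+\frac{3k+4}{9u}+O(1)$; the $u^{-2}$ and $u^{-1}$ parts cancel exactly. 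Hence $\textit{sw}_k(x)$ has the form $c_0+\sum_{j=1}^k\frac{C_j}{1-\alpha_j x}$ with simple poles only, and $\textit{sw}_{n,k}=\sum_{j=1}^k C_j\alpha_j^{\,n}$ for $n\ge1$.

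It remains to simplify the residues and recast the exponent. Evaluating $\frac{x^3}{(1-3x)^2}$ at $x=1/\alpha_j$ gives
$$C_j=\frac{4}{k+1}(1+(-1)^{j+1})\sin^2\tfrac{j\pi}{k+1}\,\frac{1}{\alpha_j(\alpha_j-3)^2},$$
and using the half-angle identities $\alpha_j-3=-4\sin^2\frac{j\pi}{2(k+1)}$ and $\sin^2\frac{j\pi}{k+1}=4\sin^2\frac{j\pi}{2(k+1)}\cos^2\frac{j\pi}{2(k+1)}$ collapses the product $C_j\alpha_j$ to $\frac{1+(-1)^{j+1}}{k+1}\cot^2\frac{j\pi}{2(k+1)}$. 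Writing $\textit{sw}_{n,k}=\sum_{j=1}^k(C_j\alpha_j)\,\alpha_j^{\,n-1}$ then yields the first stated formula. The second formula is immediate: the weight $1+(-1)^{j+1}$ equals $2$ for odd $j$ and $0$ for even $j$, so only the indices $j=2\ell+1$ with $0\le\ell\le\frac{k-1}{2}$ survive.
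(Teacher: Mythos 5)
Your proof is correct, and although it starts from the same ingredients as the paper (Theorem~\ref{th1} combined with the partial-fraction expansion of Lemma~\ref{lem1u}, and the same half-angle simplification at the end), it handles the crux---the disappearance of all $3^n$ and $n3^n$ terms---by a genuinely different mechanism. The paper works at the level of coefficients: it computes $[x^n]\frac{x(k-(3k+2)x)}{(1-3x)^2}=(3k-2n+2)3^{n-2}$, expands each $\frac{1}{(1-3x)^2(1-\alpha_j x)}$ (with $\alpha_j=1+2\cos\frac{j\pi}{k+1}$), and then cancels the resulting $3^{n-2}$ and $(n-2)3^{n-2}$ contributions against one another using two closed-form trigonometric summation identities, $\sum_{j=1}^k(1+(-1)^{j+1})\cos^2\frac{j\pi}{2(k+1)}=\frac{k+1}{2}$ and $\sum_{j=1}^k(1+(-1)^{j+1})\cos^2\frac{j\pi}{2(k+1)}\bigl(1+2\cos\frac{j\pi}{k+1}\bigr)=(k+1)(3k-2)$, which it states without proof. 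You instead prove once and for all that $x=\frac13$ is a removable singularity of $\textit{sw}_k(x)$, by Taylor-expanding $\frac{1+U_{k-1}(t)}{U_k(t)}$ at $t=1$ using $U_k(1)=k+1$, $U_{k-1}(1)=k$ and $U_r'(1)=\frac{r(r+1)(r+2)}{3}$; your expansion $1-k(t-1)+O((t-1)^2)$ and the exact cancellation of the $u^{-2}$ and $u^{-1}$ parts against the rational summand are correct. This is structurally cleaner: it explains \emph{why} no $3^n$ or $n3^n$ terms can survive, it replaces the paper's two unproved trigonometric sum identities by standard Chebyshev evaluations, and it reduces the rest to reading off residues at the simple poles, where your computation $C_j\alpha_j=\frac{1+(-1)^{j+1}}{k+1}\cot^2\frac{j\pi}{2(k+1)}$ matches the paper's final answer. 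Two points you leave implicit are easy to supply and worth a line each: the representation $\textit{sw}_k(x)=c_0+\sum_j\frac{C_j}{1-\alpha_j x}$ also uses that $\textit{sw}_k(x)$ is bounded as $x\to\infty$ (so there is no polynomial part), and one should observe that $\alpha_j=0$ can only happen for even $j$ (namely $j=\frac{2(k+1)}{3}$ when $3\mid k+1$), where the weight $1+(-1)^{j+1}$ vanishes, so every surviving term indeed has $\alpha_j\neq0$ and contributes a bona fide simple pole.
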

\begin{proof}
Fix $k$ and let $\theta_j=\frac{j\pi}{k+1}$. Lemma~\ref{lem1u} says that the coefficient of $x^n$ in $\frac{2x^2}{(1-3x)^2}\frac{1+U_{k-1}(t)}{U_{k}(t)}$ with $t=\frac{1-x}{2x}$ (see Theorem~\ref{th1}) is given by
\begin{align}
\begin{split}
p_{n,k}&=[x^n]\frac{2x^2}{(1-3x)^2}\frac1{k+1}\sum_{j=1}^k\frac{(1+(-1)^{j+1})\sin^2\theta_j}
{\frac{1-x}{2x}-\cos\theta_j}\\
\label{eqc11}
&=\frac4{k+1}[x^{n-3}]\frac{1}{(1-3x)^2}\sum_{j=1}^k\frac{(1+(-1)^{j+1})\sin^2\theta_j}
{1-x\left(1+2\cos\theta_j\right)}.
\end{split}
\end{align}
Now notice that
$$\frac{1}{(1-3x)^2(1-x\omega)}=-\frac{3}{(\omega-3)(1-3x)^2}-\frac{3\omega}{(\omega-3)^2(1-3x)}+
\frac{\omega^2}{(\omega-3)^2(1-x\omega)}$$
and
\begin{equation}\label{eqc22}
\omega-3=1+2\cos\theta_j-3=-4\sin^2(\frac{\theta_j}{2}).
\end{equation}
So we are dealing with
$$A=\frac{3}{4\sin^2(\frac{\theta_j}{2})(1-3x)^2}-\frac{3\left(1+2\cos\theta_j\right)}
{16\sin^4(\frac{\theta_j}{2})(1-3x)}+\frac{(1+2\cos\theta_j)^2}{16\sin^4(\frac{\theta_j}{2})(1-x(1+2\cos\theta_j))},$$
which implies that the coefficient of $x^{n-3}$ in $A$ is given by
$$\frac{3^{n-2}(n-2)}{4\sin^2(\frac{\theta_j}{2})}-\frac{3^{n-2}(1+2\cos\theta_j)}{16\sin^4(\frac{\theta_j}{2})}+
\frac{(1+2\cos\theta_j)^{n-1}}{16\sin^4(\frac{\theta_j}{2})}.$$
Hence, \eqref{eqc11} can be written as
$$p_{n,k}=\frac{4}{k+1}\sum_{j=1}^k(1+(-1)^{j+1})\sin^2\theta_j
\bigg[\frac{3^{n-2}(n-2)}{4\sin^2(\frac{\theta_j}{2})}-\frac{3^{n-2}(1+2\cos\theta_j)}{16\sin^4(\frac{\theta_j}{2})}+
\frac{(1+2\cos\theta_j)^{n-1}}{16\sin^4(\frac{\theta_j}{2})}
\bigg]$$
which simplifies to
$$p_{n,k}=\frac1{k+1}\sum_{j=1}^k(1+(-1)^{j+1})\cos^2(\frac{\theta_j}{2})
\bigg[4(n-2)3^{n-2}-\frac{3^{n-2}(1+2\cos\theta_j)}{\sin^2(\frac{\theta_j}{2})}+
\frac{(1+2\cos\theta_j)^{n-1}}{\sin^2(\frac{\theta_j}{2})}
\bigg].$$
Using the identity
$\sum_{j=1}^k(1+(-1)^{j+1})\cos^2(\frac{\theta_j}{2})=\frac{k+1}{2}$,
we get that
$$p_{n,k}=2(n-2)3^{n-2}+\frac1{k+1}\sum_{j=1}^k(1+(-1)^{j+1})\cot^2(\frac{\theta_j}{2})
\Big[(1+2\cos\theta_j)^{n-1}-3^{n-2}(1+2\cos\theta_j)\Big].$$
Note that
\begin{align*}
[x^n]\frac{x(k-(3k+2)x)}{(1-3x)^2}&=
[x^{n-1}]\frac{k}{(1-3x)^2}-[x^{n-2}]\frac{3k+2}{(1-3x)^2}\\
&=kn3^{n-1}-(n-1)(3k+2)3^{n-2}\\
&=(3k-2n+2)3^{n-2}.
\end{align*}
Therefore, Theorem~\ref{th1} gives
\begin{align*}
\textit{sw}_{n,k}&=(3k-2n+2)3^{n-2}+p_{n,k}\\
&=(3k-2)3^{n-2}\\&\qquad+\frac1{k+1}\sum\limits_{j=1}^k(1+(-1)^{j+1})\cot^2(\frac{\theta_j}{2})
\Big[(1+2\cos\theta_j)^{n-1}-3^{n-2}(1+2\cos\theta_j)\Big]\\
&=(3k-2)3^{n-2}\\&\qquad+\frac1{k+1}\sum\limits_{j=1}^k(1+(-1)^{j+1})\cot^2(\frac{\theta_j}{2})(1+2\cos\theta_j)
\Big[(1+2\cos\theta_j)^{n-2}-3^{n-2}\Big].
\end{align*}
Notice further that
$$\sum_{j=1}^k(1+(-1)^{j+1})\cos^2(\frac{\theta_j}{2})(1+2\cos\theta_j)=(k+1)(3k-2),$$
thus we have that
$$\textit{sw}_{n,k}=\frac1{k+1}\sum_{j=1}^k(1+(-1)^{j+1})\cot^2(\frac{\theta_j}{2})
(1+2\cos\theta_j)^{n-1},$$
as claimed.
\end{proof}

\begin{corollary}
Asymptotically, we have as $n \to \infty$,
\begin{equation*}
\textit{sw}_{n,k}\sim\frac{2}{k+1}\cot^2\frac{\pi}{2(k+1)}
\Big(1+2\cos\frac{\pi}{k+1}\Big)^{n-1}.
\end{equation*}
\end{corollary}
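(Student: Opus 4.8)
The plan is to read off the asymptotics directly from the closed form established in Theorem~\ref{th11}. I would start from the second (trigonometric-sum) expression
$$\textit{sw}_{n,k}=\frac{2}{k+1}\sum_{0\le j\le\frac{k-1}{2}}\cot^2\frac{(2j+1)\pi}{2(k+1)}\,\lambda_j^{\,n-1},\qquad \lambda_j:=1+2\cos\frac{(2j+1)\pi}{k+1},$$
which exhibits $\textit{sw}_{n,k}$ as a finite linear combination, with strictly positive coefficients $\frac{2}{k+1}\cot^2\frac{(2j+1)\pi}{2(k+1)}$, of the $(n-1)$-st powers of the bases $\lambda_j$. The entire matter thus reduces to identifying the base of largest absolute value and checking that it occurs with a nonzero coefficient.

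The angles $\phi_j:=\frac{(2j+1)\pi}{k+1}$ are strictly increasing in $j$ and all lie in $(0,\pi)$, so $\cos\phi_j$ is strictly decreasing and hence $\lambda_0>\lambda_1>\cdots$. In particular $\lambda_0=1+2\cos\frac{\pi}{k+1}$ is the largest base; moreover $\frac{\pi}{k+1}\le\frac{\pi}{2}$ gives $\lambda_0\ge 1$. For every $j\ge 1$ one has $\phi_j\in(0,\pi)$, whence $\lambda_j=1+2\cos\phi_j\in(-1,\lambda_0)$; combined with $\lambda_0\ge 1$ this yields $|\lambda_j|<\lambda_0$. Thus $\lambda_0$ is the strictly dominant base in absolute value, and its coefficient $\frac{2}{k+1}\cot^2\frac{\pi}{2(k+1)}$ is strictly positive since $\frac{\pi}{2(k+1)}\in(0,\frac{\pi}{4}]$.

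Finally I would factor out the dominant term: each remaining summand carries a factor $(\lambda_j/\lambda_0)^{n-1}$ with $|\lambda_j/\lambda_0|<1$, which tends to $0$ as $n\to\infty$. This gives
$$\textit{sw}_{n,k}=\frac{2}{k+1}\cot^2\frac{\pi}{2(k+1)}\,\lambda_0^{\,n-1}\bigl(1+o(1)\bigr),$$
which is precisely the assertion. I anticipate no serious obstacle: the only step needing a moment's care is the comparison of absolute values of the bases, since for large $j$ the base $\lambda_j$ can be negative, and one must rule out that such a negative base exceeds $\lambda_0$ in magnitude—this is exactly what the bound $\lambda_j\in(-1,\lambda_0)$ together with $\lambda_0\ge1$ settles.
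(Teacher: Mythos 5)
Your proposal is correct and matches the paper's reasoning: the corollary is stated there as an immediate consequence of Theorem~\ref{th11}, obtained exactly as you do by extracting the dominant base $1+2\cos\frac{\pi}{k+1}$ from the trigonometric sum. Your explicit verification that negative bases cannot compete in absolute value (via $\lambda_j>-1$ and $\lambda_0\ge 1$) is a careful spelling-out of the step the paper leaves implicit.
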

Note that since there are $k$ possible initial letters for a smooth word and at most three possibilities thereafter for each subsequent letter, the number of smooth words is bounded above by $k3^{k-1}$.
This upper bound becomes increasingly more accurate as $k$ grows larger (except for a factor of $\frac{8}{\pi ^2}$ below), since as $k \to \infty$,
\[1+2\cos\frac{\pi}{k+1}=3-\frac{\pi ^2}{k^2}+\frac{2 \pi ^2}{k^3}+O(k^{-4})
\]
and
\[\frac{2}{k+1}\cot^2\frac{\pi}{2(k+1)}=\frac{8 k}{\pi ^2}+\frac{8}{\pi ^2}+O\bigl(\frac{1}{ k}\bigr).
\]

\section{Smooth cyclic words} \label{smoothcyc}
In this section we find an explicit formula for the generating function
$\textit{scw}_k(x)=\sum_{n\ge 0} \textit{scw}_{n,k}x^n$.

Denote by $\textit{scw}_k(x\mid i_1\cdots i_s\mid j)$ the generating function for the number of smooth cyclic words $\sigma=\sigma_1\cdots\sigma_n$ of length $n$ over $[k]$ such that $\sigma_1\cdots\sigma_s=i_1\cdots i_s$ and $\sigma_n=j$. We define
    $$\textit{scw}_k(x,v\mid i_1\cdots i_s)=\sum_{j=1}^k \textit{scw}_k(x\mid i_1\cdots i_s\mid j)v^j.$$

\begin{lemma}\label{lemscw1}
For all $i=1,2,\ldots,k$,
\begin{align*}
\textit{scw}_k(x,v\mid i)&=(v^{i-1}[\![i>1]\!]+v^i+v^{i+1}[\![k>i]\!])x^2\\&+x(\textit{scw}_k(x,v\mid i-1)+\textit{scw}_k(x,v\mid i)+\textit{scw}_k(x,v\mid i+1),
\end{align*}
where $\textit{scw}_k(x,v\mid j)=0$ for $j\not\in [k]$ and $[\![P]\!]=1$ if the condition $P$ holds,
and $[\![P]\!]=0$ otherwise.
\end{lemma}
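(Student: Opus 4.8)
The plan is to prove the recurrence by the standard device of deleting the first letter, once the role of the auxiliary variable is pinned down. I would interpret $\textit{scw}_k(x,v\mid i)=\sum_{j=1}^k\textit{scw}_k(x\mid i\mid j)v^j$ as the generating function enumerating the smooth words $\sigma=\sigma_1\cdots\sigma_n$ with $\sigma_1=i$, where $x$ records the length $n$ and $v$ records the terminal letter $\sigma_n$; since the prefix $i$ occupies position $1$ and the marked letter occupies the distinct position $n$, only words of length $n\ge 2$ occur (which is why the lowest-order term is $x^2$ rather than $x$). It is important to note that the decomposition uses only the interior smoothness of $\sigma$: the cyclic constraint $|\sigma_n-\sigma_1|\le 1$ is \emph{not} imposed inside the recurrence, but is read off afterwards by retaining only the contributions in $v^{i-1},v^i,v^{i+1}$. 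With this reading fixed, I would fix $i\in[k]$ and split the words counted by $\textit{scw}_k(x,v\mid i)$ according to whether $n=2$ or $n\ge 3$.

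For the base case $n=2$, a smooth word $i\sigma_2$ is admissible exactly when $\sigma_2\in\{i-1,i,i+1\}\cap[k]$; each contributes weight $x^2v^{\sigma_2}$, and summing over the admissible $\sigma_2$ produces precisely $(v^{i-1}[\![i>1]\!]+v^i+v^{i+1}[\![k>i]\!])x^2$, the two indicators encoding the boundary letters $i=1$ and $i=k$ where $\sigma_2=i-1$ or $\sigma_2=i+1$ falls outside $[k]$.

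For the recursive case $n\ge 3$, I would write $\sigma=i\tau$ with $\tau=\sigma_2\cdots\sigma_n$ of length $n-1\ge 2$, and use the key observation that $\sigma$ is smooth if and only if $\tau$ is smooth and $|i-\sigma_2|\le 1$, i.e. $\sigma_2\in\{i-1,i,i+1\}\cap[k]$. Because the terminal letter of $\tau$ coincides with $\sigma_n$, the $v$-marking is inherited unchanged, while deleting the leading letter $i$ contributes a single factor $x$; hence the words with second letter $\sigma_2$ contribute $x\,\textit{scw}_k(x,v\mid\sigma_2)$. Summing over the three admissible values of $\sigma_2$ and invoking the convention $\textit{scw}_k(x,v\mid j)=0$ for $j\notin[k]$ to absorb the boundary cases gives $x\bigl(\textit{scw}_k(x,v\mid i-1)+\textit{scw}_k(x,v\mid i)+\textit{scw}_k(x,v\mid i+1)\bigr)$. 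Adding the two contributions yields the claimed identity.

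The argument is essentially routine, so I do not expect a genuine obstacle; the two points demanding care are the bookkeeping at the boundary letters $i=1$ and $i=k$, handled uniformly by the indicators $[\![i>1]\!],[\![k>i]\!]$ together with the zero convention, and the conceptual point that the factor $v$ records the \emph{last} letter rather than the cyclic condition itself. The crux is therefore to verify that first-letter deletion is weight-preserving, i.e. that it leaves the terminal-letter statistic intact, so that the split into the length-$2$ and length-$\ge 3$ parts is an exact bijective decomposition.
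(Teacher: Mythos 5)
Your proof is correct, and its combinatorial core is identical to the paper's: split off the first letter, with the two-letter words giving the inhomogeneous term $(v^{i-1}[\![i>1]\!]+v^i+v^{i+1}[\![k>i]\!])x^2$ and the words of length at least three giving $x\bigl(\textit{scw}_k(x,v\mid i-1)+\textit{scw}_k(x,v\mid i)+\textit{scw}_k(x,v\mid i+1)\bigr)$.

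Where you genuinely depart from the paper is your opening reinterpretation of the generating function, and this is not a cosmetic point: it is what makes the lemma true. The paper defines $\textit{scw}_k(x\mid i\mid j)$ as counting smooth \emph{cyclic} words with $\sigma_1=i$ and $\sigma_n=j$; under that literal reading the stated recurrence fails, because deleting the first letter does not preserve the cyclic constraint (the constraint refers to the letter being deleted). Concretely, for $k=3$ and $i=1$, the coefficient of $x^3$ in the cyclic-constrained generating function is $2v+2v^2$ (the words $111,112,121,122$), whereas the right-hand side of the recurrence produces $2v+2v^2+v^3$, the stray $v^3$ coming from $123$, which is smooth but not cyclic. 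Your reading --- enumerate \emph{all} smooth words with $v$ marking the terminal letter, and impose $|\sigma_n-\sigma_1|\le 1$ only afterwards by retaining the coefficients of $v^{i-1},v^i,v^{i+1}$ --- is exactly the one under which the recurrence holds, and it is also the one the paper tacitly uses later: in the proof of Theorem~\ref{th2} the cyclic condition enters only through the remark that $\textit{scw}_k(x\mid i\mid j)=0$ for $|j-i|>1$, i.e., through which coefficients of $v$ are extracted from the solution of the matrix system. So your proof is the paper's decomposition carried out on the corrected definition; the paper's own two-line proof glosses over this mismatch between its stated definition and its lemma, and your version is the one consistent with how the lemma is subsequently applied.
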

\begin{proof}
Let $\sigma$ be any smooth cyclic word containing at least two letters.
If $\sigma$ contains exactly two letters and  $\sigma_1=i$, then $j=i-1,i,i+1$, which gives the contribution $(v^{i-1}[\![i>1]\!]+v^i+v^{i+1}[\![k>i]\!])x^2$. Otherwise, the second letter of $\sigma=i\sigma_2\cdots\sigma_{n-1}j$ is either $i-1,i$ or $i+1$. Hence, in terms of generating functions, we have
\begin{multline}
\textit{scw}_k(x,v\mid i)
=(v^{i-1}[\![i>1]\!]+v^i+v^{i+1}[\![k>i]\!])x^2\\+\bigl(\textit{scw}_k(x,v\mid i-1)+\textit{scw}_k(x,v\mid i)+\textit{scw}_k(x,v\mid i+1)\bigr)x,
\end{multline}
for all $1\leq i\leq k$, which completes the proof.
\end{proof}

Restating Lemma~\ref{lemscw1} as a matrix system we have
\begin{equation}\label{sys2}
\textbf{A}\begin{pmatrix}\textit{scw}_k(x,v\mid 1)\\\textit{scw}_k(x,v\mid 2)\\\vdots\\ \textit{scw}_k(x,v\mid {k-1})\\\textit{scw}_k(x,v\mid k)\end{pmatrix} =\begin{pmatrix}v+v^2\\v+v^2+v^3\\\vdots\\ v^{k-2}+v^{k-1}+v^k\\v^{k-1}+v^k\end{pmatrix}x^2,
\end{equation}
where $\textbf{A}$ is the tridiagonal matrix already defined in the previous section.

\begin{theorem}\label{th2}
The generating function for the number of smooth cyclic words of length $n$ over an alphabet of $k$ letters
is given by
$$\textit{scw}_k(x)=1+\frac{kx(1+3x)}{(1+x)(1-3x)}-\frac{2(k+1)x}{(1+x)(1-3x)}\frac{U_{k-1}\left(\frac{1-x}{2x}\right)}{U_k\left(\frac{1-x}{2x}\right)}.$$
\end{theorem}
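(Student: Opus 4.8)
The plan is to solve the linear system \eqref{sys2} using the inverse \eqref{invA} and then recover $\textit{scw}_k(x)$ by summing over starting letters while imposing the cyclic constraint through coefficient extraction in $v$. The empty word and the $k$ single letters contribute $1+kx$, and every smooth cyclic word of length at least two is counted in some $\textit{scw}_k(x,v\mid i)$, the exponent of $v$ recording the final letter $\sigma_n$. Since the cyclic condition $|\sigma_1-\sigma_n|\le 1$ with $\sigma_1=i$ says exactly that the final letter is one of $i-1,i,i+1$, I would write
\[\textit{scw}_k(x)=1+kx+\sum_{i=1}^k\bigl([v^{i-1}]+[v^i]+[v^{i+1}]\bigr)\textit{scw}_k(x,v\mid i),\]
the boundary terms vanishing automatically because $\textit{scw}_k(x,v\mid i)$ carries no $v^0$ or $v^{k+1}$.

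The observation that makes this tractable is that the extraction collapses to a trace. Write $\textbf{A}=I-x\textbf{M}$, where $\textbf{M}$ is the symmetric $0/1$ tridiagonal matrix with $\textbf{M}_{ij}=1$ iff $|i-j|\le1$; then the right-hand side of \eqref{sys2} is $x^2\textbf{M}\mathbf{v}$ with $\mathbf{v}=(v,v^2,\dots,v^k)^{\top}$, and since $\textbf{A}^{-1}\textbf{M}=\tfrac1x(\textbf{A}^{-1}-I)$ the solution is $\textit{scw}_k(x,v\mid i)=x\sum_{j}(\textbf{A}^{-1}-I)_{ij}v^j$. Hence the extraction selects precisely the entries with $\textbf{M}_{ij}=1$, so that
\[\sum_{i=1}^k\bigl([v^{i-1}]+[v^i]+[v^{i+1}]\bigr)\textit{scw}_k(x,v\mid i)=x\sum_{i,j}\textbf{M}_{ij}(\textbf{A}^{-1}-I)_{ij}=x\,\mathrm{tr}\bigl(\textbf{M}(\textbf{A}^{-1}-I)\bigr),\]
using the symmetry of $\textbf{M}$. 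With $\textbf{M}\textbf{A}^{-1}=\tfrac1x(\textbf{A}^{-1}-I)$ and $\mathrm{tr}\,\textbf{M}=k$ this equals $\mathrm{tr}(\textbf{A}^{-1})-k-kx$, and therefore $\textit{scw}_k(x)=1-k+\mathrm{tr}(\textbf{A}^{-1})$. (Conceptually this just says smooth cyclic words of length $n$ are the closed walks counted by $\mathrm{tr}(\textbf{M}^n)$, a useful consistency check.)

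It then remains to evaluate the trace from the diagonal of \eqref{invA}, namely $\mathrm{tr}(\textbf{A}^{-1})=\frac{1}{xU_k(t)}\sum_{i=1}^k U_{i-1}(t)U_{k-i}(t)$ with $t=\frac{1-x}{2x}$. I would compute the Chebyshev convolution $\sum_{i=1}^k U_{i-1}(t)U_{k-i}(t)=\sum_{r=0}^{k-1}U_r(t)U_{k-1-r}(t)$ in closed form; writing $U_r(\cos\theta)=\sin((r+1)\theta)/\sin\theta$ and applying product-to-sum (or iterating the product identity \eqref{eqid2}), this convolution equals $\frac{(k+1)U_{k-1}(t)-ktU_k(t)}{2(1-t^2)}$. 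Substituting $t=\frac{1-x}{2x}$, for which $1-t^2=\frac{(1-3x)(1+x)}{4x^2}$, and dividing by $xU_k(t)$ gives
\[\mathrm{tr}(\textbf{A}^{-1})=\frac{k(1-x)}{(1-3x)(1+x)}-\frac{2(k+1)x}{(1-3x)(1+x)}\frac{U_{k-1}(t)}{U_k(t)}.\]
Combining with the leading $1-k$ and using $1-k+\frac{k(1-x)}{(1-3x)(1+x)}=1+\frac{kx(1+3x)}{(1+x)(1-3x)}$ then yields the claimed formula.

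The main obstacle is the middle stage: correctly reducing the $v$-extraction to a trace and then carrying out the Chebyshev convolution together with the $t\mapsto x$ substitution without sign or boundary errors. Once the identity $\textit{scw}_k(x)=1-k+\mathrm{tr}(\textbf{A}^{-1})$ is secured and the convolution sum is in hand, the remaining manipulation is routine algebra of rational functions in $x$.
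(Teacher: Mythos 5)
Your proof is correct, and its key step is genuinely different from the paper's. The paper also starts from Lemma \ref{lemscw1}, the system \eqref{sys2}, and the inverse \eqref{invA}, but it then extracts the relevant coefficients of $v^j$ entry by entry, writing $\textit{scw}_k(x)$ as $1+kx$ plus a weighted sum of entries of $\textbf{A}^{-1}$ along five bands ($\textbf{A}^{-1}_{i(i\pm2)}+2\textbf{A}^{-1}_{i(i\pm1)}+3\textbf{A}^{-1}_{ii}$, with boundary corrections at $i=1,k$), and then reduces this with several Chebyshev product identities which, as the paper's own footnote concedes, are not so easy to \emph{find}. Your observation that the right-hand side of \eqref{sys2} equals $x^2\textbf{M}\mathbf{v}$ with $\textbf{A}=I-x\textbf{M}$, so that the solution telescopes to $x(\textbf{A}^{-1}-I)\mathbf{v}$ and the $v$-extraction becomes $x\,\mathrm{tr}\bigl(\textbf{M}(\textbf{A}^{-1}-I)\bigr)$, collapses all of this to the clean identity $\textit{scw}_k(x)=1-k+\mathrm{tr}(\textbf{A}^{-1})$; after that you need only the diagonal of \eqref{invA} and the single convolution identity $\sum_{r=0}^{k-1}U_r(t)U_{k-1-r}(t)=\frac{(k+1)U_{k-1}(t)-ktU_k(t)}{2(1-t^2)}$, which product-to-sum verifies directly. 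This buys a much shorter computation and, more importantly, makes the structure transparent: $1-k+\mathrm{tr}\bigl((I-x\textbf{M})^{-1}\bigr)$ is exactly the transfer-matrix statement that $\textit{scw}_{n,k}=\mathrm{tr}(\textbf{M}^n)$ counts closed walks in the path graph with loops, which also explains at a glance why Theorem \ref{th22} is a sum of $n$-th powers of the eigenvalues $1+2\cos\frac{j\pi}{k+1}$ of $\textbf{M}$.

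One typo to fix: with $t=\frac{1-x}{2x}$ one has $1-t^2=\frac{(3x-1)(1+x)}{4x^2}=-\frac{(1-3x)(1+x)}{4x^2}$, not $+\frac{(1-3x)(1+x)}{4x^2}$ as you wrote. The sign slip does not propagate: the expression you then state for $\mathrm{tr}(\textbf{A}^{-1})$, and hence the final formula, is the correct one, as one checks for instance at $k=1$, where it reduces to $\frac{1}{1-x}$.
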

\begin{proof}
Equation~\eqref{sys2} gives that
$$\begin{pmatrix}\textit{scw}_k(x,v\mid 1)\\\textit{scw}_k(x,v\mid 2)\\\vdots\\ \textit{scw}_k(x,v\mid {k-1})\\\textit{scw}_k(x,v\mid k)\end{pmatrix} =\textbf{A}^{-1}\begin{pmatrix}v+v^2\\v+v^2+v^3\\\vdots\\ v^{k-2}+v^{k-1}+v^k\\v^{k-1}+v^k\end{pmatrix}x^2,$$
where $\textbf{A}^{-1}$ is defined in \eqref{invA}.

Fix $t=\frac{1-x}{2x}$ and $i$, where $i=1,2,\ldots,k$. By comparing the coefficients of $v^j$ in the $i$-th row in the above matrix equation we obtain, for $i\neq 1,k$,
$$\sum_{j=1}^k \textit{scw}_k(x\mid i\mid j)=x^2(\textbf{A}^{-1}_{i(i-2)}+2\textbf{A}^{-1}_{i(i-1)}+3\textbf{A}^{-1}_{ii}
+2\textbf{A}^{-1}_{i(i+1)}+\textbf{A}^{-1}_{i(i+2)}),$$
and for $i=1,k$  we have
\begin{align*}
\sum\limits_{j=1}^2\textit{scw}_k(x\mid 1\mid j)&=x^2(2\textbf{A}^{-1}_{11}+2\textbf{A}^{-1}_{12}+\textbf{A}^{-1}_{13}),\\
\sum\limits_{j=k-1}^k\textit{scw}_k(x\mid k\mid j)&=x^2(2\textbf{A}^{-1}_{kk}+2\textbf{A}^{-1}_{k(k-1)}+\textbf{A}^{-1}_{k(k-2)}).
\end{align*}
Note that $\textit{scw}_k(x\mid i\mid j)=0$ for $|j-i|>1$. Thus the generating function $\textit{scw}_k(x)$ is given by
$$1+kx-x^2(\textbf{A}^{-1}_{11}+\textbf{A}^{-1}_{kk})+x^2\sum_{i=1}^k(\textbf{A}^{-1}_{i(i-2)}+2\textbf{A}^{-1}_{i(i-1)}+3\textbf{A}^{-1}_{ii}+2\textbf{A}^{-1}_{i(i+1)}+\textbf{A}^{-1}_{i(i+2)}),$$
where $1$ counts the empty words, and $kx$ counts the words of length $1$ in the set of words over $[k]$.
Therefore, applying \eqref{invA} we obtain
\begin{align*}
\textit{scw}_k(x)&=1+kx+\dfrac{2x(2U_{k-1}(t)+2U_{k-2}(t)+U_{k-3}(t))}{U_k(t)}\\
&+\dfrac{x}{U_k(t)}\sum\limits_{i=2}^{k-1}\bigl(U_{i-3}(t)+2U_{i-2}(t)+3U_{i-1}(t)\bigr)U_{k-i}(t)\\&+U_{i-1}(t)(2U_{k-1-i}(t)+U_{k-2-i}(t)),
\end{align*}
which, by simple algebraic operations,  is equivalent to
\begin{align*}
\textit{scw}_k(x)&=1+kx+\dfrac{2x(2U_{k-1}(t)+2U_{k-2}(t)+U_{k-3}(t))}{U_k(t)}\\
&+\dfrac{x}{U_k(t)}\sum\limits_{i=0}^{k-3}\bigl(2U_{i-1}(t)+4U_{i}(t)+3U_{i+1}(t)\bigr)U_{k-2-i}(t).
\end{align*}
and can be simplified to%
\footnote{Such identities can be proved using a computer, as explained before. It is, however, not so easy to \emph{find} them.}
\begin{align*}
\textit{scw}_k(x)&=1+kx\\
&+\dfrac{x}{U_k(t)}\biggl[4U_{k-1}(t)+4U_{k-2}(t)+2U_{k-3}(t)\\
&\qquad+\frac{x^2}{(1+x)(1-3x)}\Bigl(3(k-2)U_{k+1}(t)+4(k-2)U_k(t)-kU_{k-1}(t)\\
&\qquad\qquad-4(k-1)U_{k-2}(t)-2(k+1)U_{k-3}(t)-4U_{k-4}(t)-2U_{k-5}(t)\Bigr)\biggr].
\end{align*}
Using the recursion \eqref{eqcheb} for the Chebyshev polynomials several times we arrive at
$$\textit{scw}_k(x)=1+\frac{kx(1+3x)}{(1+x)(1-3x)}-\frac{2(k+1)x}{(1+x)(1-3x)}\frac{U_{k-1}(t)}{U_k(t)},$$
as claimed.
\end{proof}

Now let us find an explicit formula for the number of cyclic smooth words of length $n$ over the alphabet $[k]$.

\begin{theorem}\label{th22}
The number of smooth cyclic words of length $n$ over the alphabet $[k]$ is given by
$$\textit{scw}_{n,k}=\sum\limits_{j=1}^k\left[1+2\cos\Bigl(\frac{j\pi}{k+1}\Bigr)\right]^n.$$
\end{theorem}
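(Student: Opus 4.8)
The plan is to extract the coefficient $[x^n]$ directly from the generating function of Theorem~\ref{th2}, in the same spirit as the proof of Theorem~\ref{th11}. The only genuinely new ingredient I need is the partial fraction decomposition of the quotient $U_{k-1}(t)/U_k(t)$ as a rational function of $t$. Writing $\rho_j=\cos\theta_j$ with $\theta_j=\frac{j\pi}{k+1}$ for the zeros of $U_k$, general principles give $U_{k-1}(t)/U_k(t)=\sum_{j=1}^k\frac{U_{k-1}(\rho_j)}{U_k'(\rho_j)}\cdot\frac{1}{t-\rho_j}$. Here $U_k'(\rho_j)=\frac{(k+1)(-1)^{j+1}}{\sin^2\theta_j}$ has already been computed in the proof of Lemma~\ref{lem1u}, and a one-line evaluation using $\sin(k\theta_j)=\sin(j\pi-\theta_j)=(-1)^{j+1}\sin\theta_j$ gives $U_{k-1}(\rho_j)=(-1)^{j+1}$. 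Hence
$$\frac{U_{k-1}(t)}{U_k(t)}=\frac{1}{k+1}\sum_{j=1}^k\frac{\sin^2\theta_j}{t-\cos\theta_j}.$$

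Next I would substitute $t=\frac{1-x}{2x}$, using $t-\cos\theta_j=\frac{1-x(1+2\cos\theta_j)}{2x}$, which turns each summand into a bona fide rational function of $x$ with a pole at $x=1/(1+2\cos\theta_j)$. Setting $\omega_j=1+2\cos\theta_j$, the last term of $\textit{scw}_k(x)$ becomes $-\frac{4x^2}{(1+x)(1-3x)}\sum_{j=1}^k\frac{\sin^2\theta_j}{1-\omega_j x}$. I would then decompose each factor $\frac{1}{(1+x)(1-3x)(1-\omega_j x)}$ into partial fractions in $x$, with poles at $x=-1$, $x=1/3$ and $x=1/\omega_j$, and read off $[x^n]$ term by term. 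The three residues simplify cleanly through the half-angle identities $\omega_j+1=4\cos^2\frac{\theta_j}{2}$, $3-\omega_j=4\sin^2\frac{\theta_j}{2}$ and $\sin^2\theta_j=4\sin^2\frac{\theta_j}{2}\cos^2\frac{\theta_j}{2}$, so that the contribution of the $j$-th summand to $[x^n]$ collapses to $\omega_j^{\,n}-\sin^2\frac{\theta_j}{2}(-1)^n-\cos^2\frac{\theta_j}{2}\,3^n$.

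The decisive step is then summing over $j$ and merging with the elementary term $\frac{kx(1+3x)}{(1+x)(1-3x)}$, whose coefficient of $x^n$ is $\frac{k}{2}(-1)^n+\frac{k}{2}3^n$ for $n\ge1$ by an easy decomposition. This is the crux I expect to be delicate, because the spurious $3^n$ and $(-1)^n$ parts must cancel exactly. For that I would invoke the elementary fact $\sum_{j=1}^k\cos\theta_j=0$ (the terms pair up under $j\leftrightarrow k+1-j$, the central term, if any, vanishing), which yields $\sum_{j=1}^k\sin^2\frac{\theta_j}{2}=\sum_{j=1}^k\cos^2\frac{\theta_j}{2}=\frac{k}{2}$. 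With these, the spurious parts coming from the last term total $-\frac{k}{2}(-1)^n-\frac{k}{2}3^n$, exactly cancelling the contribution of $\frac{kx(1+3x)}{(1+x)(1-3x)}$ and leaving precisely $\sum_{j=1}^k\omega_j^{\,n}=\sum_{j=1}^k\bigl(1+2\cos\frac{j\pi}{k+1}\bigr)^n$, as claimed.

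As a final safeguard I would note that this clean answer is transparent combinatorially: $\textit{scw}_{n,k}$ counts closed walks of length $n$ on the path on $k$ vertices with a loop added at every vertex, whose adjacency matrix $M$ has eigenvalues $1+2\cos\frac{j\pi}{k+1}$, so that $\textit{scw}_{n,k}=\mathrm{tr}\,M^n=\sum_{j=1}^k\bigl(1+2\cos\frac{j\pi}{k+1}\bigr)^n$. This independent interpretation confirms that no sign or normalization error has slipped into the coefficient extraction.
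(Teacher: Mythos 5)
Your proposal is correct, and its main line is essentially the paper's own proof: you extract $[x^n]$ from Theorem~\ref{th2} using the decomposition $\frac{U_{k-1}(t)}{U_k(t)}=\frac{1}{k+1}\sum_{j=1}^k\frac{\sin^2\theta_j}{t-\cos\theta_j}$ with $\theta_j=\frac{j\pi}{k+1}$, the same three-pole partial fraction split of $\frac{1}{(1+x)(1-3x)(1-\omega x)}$ with $\omega=1+2\cos\theta_j$, and the same identities $\sum_j\sin^2\frac{\theta_j}{2}=\sum_j\cos^2\frac{\theta_j}{2}=\frac{k}{2}$ to cancel the spurious $3^n$ and $(-1)^n$ contributions against those of $\frac{kx(1+3x)}{(1+x)(1-3x)}$; your intermediate expressions match the paper's $q_{n,k}$ computation term for term. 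The only cosmetic difference in this part is how the key decomposition is obtained: the paper reads it off by combining the two identities of Lemma~\ref{lem1u}, while you recompute the residues directly, evaluating $U_{k-1}(\cos\theta_j)=(-1)^{j+1}$ against $U_k'(\cos\theta_j)$ from the proof of Lemma~\ref{lem1u}; both are one-line derivations from the same ingredients. What is genuinely different---and worth keeping---is your closing ``safeguard'': it is not merely a check but a complete, self-contained proof. Smooth cyclic words of length $n\ge 1$ are exactly the closed walks of length $n$ on the path on $k$ vertices with a loop at each vertex, whose adjacency matrix is $I$ plus the adjacency matrix of the path, with classical spectrum $\bigl\{1+2\cos\frac{j\pi}{k+1}\bigr\}_{j=1}^k$; taking traces gives the theorem instantly, with no generating functions, partial fractions, or trigonometric summation identities. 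That argument is shorter and more conceptual than the paper's route, and it explains structurally why the answer is a pure power sum (and why it fails at $n=0$, where the trace gives $k$ rather than $1$); what the paper's route buys instead is uniformity, since it proceeds directly from the generating function of Theorem~\ref{th2}, which is needed in its own right and whose correctness your trace argument incidentally confirms.
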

\begin{proof}
Fix $k$ and $\theta_j=\frac{j\pi}{k+1}$. Then  Lemma~\ref{lem1u} implies that the coefficient of $x^n$ in $\frac{2(k+1)x}{(1+x)(1-3x)}\frac{U_{k-1}(t)}{U_k(t)}$, with $t=\frac{1-x}{2x}$, is given by
$$q_{n,k}=[x^n]\frac{4x^2}{(1+x)(1-3x)}\sum_{j=1}^k\frac{\sin^2\theta_j}{1-x(1+2\cos\theta_j)}.$$
Using the fact that
$$\frac{1}{(1+x)(1-3x)(1-x\omega)}=\frac{1}{4(1+\omega)(1+x)}+\frac{\omega^2}{(\omega-3)(1+\omega)(1-x\omega)}-
\frac{9}{4(\omega-3)(1-3x)},$$
and \eqref{eqc22}, we obtain that
\begin{align*}
q_{n,k}&=\sum\limits_{j=1}^k\sin^2\theta_j\left[\frac{(-1)^n}{4\cos^2(\frac{\theta_j}{2})}-\frac{(1+2\cos\theta_j)^n}{\sin^2\theta_j}
+\frac{3^n}{4\sin^2(\frac{\theta_j}{2})}\right]\\
&=\sum\limits_{j=1}^k\left[\sin^2(\frac{\theta_j}{2})(-1)^n-(1+2\cos\theta_j)^n+\cos^2(\frac{\theta_j}{2})3^n\right].
\end{align*}
Using the identities $\sum_{j=1}^k\cos^2(\frac{\theta_j}{2})=\frac{k}{2}$ and  $\sum_{j=1}^k\sin^2(\frac{\theta_j}{2})=\frac{k}{2}$, we get that
\begin{align*}
q_{n,k}&=\frac{k}{2}\bigl((-1)^n+3^n\bigr)-\sum\limits_{j=1}^k(1+2\cos\theta_j)^n.
\end{align*}
Hence, Theorem~\ref{th2} states that the coefficient of $x^n$, $n\geq1$, in the generating function $\textit{scw}_k(x)$ is given by
$$\textit{scw}_{n,k}=\frac{k}{2}\bigl(3^n+(-1)^n\bigr)-q_{n,k}=\sum\limits_{j=1}^k(1+2\cos\theta_j)^n,$$
as claimed.
\end{proof}
\begin{corollary}
Asymptotically, we have as $n \to \infty$,
\begin{equation*}
\textit{scw}_{n,k}\sim\left[1+2\cos\Bigl(\frac{\pi}{k+1}\Bigr)\right]^n.
\end{equation*}
\end{corollary}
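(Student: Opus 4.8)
The plan is to derive the asymptotics directly from the exact expression in Theorem~\ref{th22}, which writes $\textit{scw}_{n,k}$ as a finite sum of $n$-th powers, $\textit{scw}_{n,k}=\sum_{j=1}^k\lambda_j^{\,n}$ with $\lambda_j=1+2\cos\bigl(\frac{j\pi}{k+1}\bigr)$. For such a sum the growth is controlled by the base of largest modulus, so the entire task reduces to singling out that base and confirming that it is attained uniquely and dominates all the others strictly in absolute value.

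First I would note that the angles $\frac{j\pi}{k+1}$, $j=1,\dots,k$, increase strictly through the interval $(0,\pi)$, on which cosine is strictly decreasing. Hence $\lambda_1>\lambda_2>\dots>\lambda_k$, so the largest base is $\lambda_1=1+2\cos\bigl(\frac{\pi}{k+1}\bigr)$, which is exactly the quantity appearing in the claimed asymptotic. For $k\ge2$ one has $\frac{\pi}{k+1}\le\frac{\pi}{3}$, so $\cos\bigl(\frac{\pi}{k+1}\bigr)\ge\frac12$ and thus $\lambda_1\ge2>1$ (the case $k=1$ is trivial, the sum having a single term).

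The only genuine subtlety is that for $k\ge3$ the smaller bases can turn negative---for example $\lambda_k=1-2\cos\bigl(\frac{\pi}{k+1}\bigr)<0$---so I must rule out any of them rivalling $\lambda_1$ in modulus. For every $j\ge2$ the angle lies in $\bigl(\frac{\pi}{k+1},\pi\bigr)$, giving $\cos\bigl(\frac{j\pi}{k+1}\bigr)\in\bigl(-1,\cos\frac{\pi}{k+1}\bigr)$ and hence $\lambda_j\in(-1,\lambda_1)$. If such a $\lambda_j$ is nonnegative then $|\lambda_j|=\lambda_j<\lambda_1$; if it is negative then $|\lambda_j|<1\le\lambda_1$. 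Either way $|\lambda_j|<\lambda_1$, so $\lambda_1$ is the unique base of maximal modulus.

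Finally, dividing the exact formula through by $\lambda_1^{\,n}$ gives
\[
\frac{\textit{scw}_{n,k}}{\lambda_1^{\,n}}=1+\sum_{j=2}^k\Bigl(\frac{\lambda_j}{\lambda_1}\Bigr)^{\!n},
\]
and since each ratio satisfies $|\lambda_j/\lambda_1|<1$, every term in the sum tends to $0$ as $n\to\infty$. Therefore $\textit{scw}_{n,k}/\lambda_1^{\,n}\to1$, which is precisely the assertion $\textit{scw}_{n,k}\sim\bigl[1+2\cos\bigl(\frac{\pi}{k+1}\bigr)\bigr]^n$. I expect the verification that the negative bases cannot match $\lambda_1$ in modulus to be the only step demanding any attention.
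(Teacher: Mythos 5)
Your proof is correct and follows the same route the paper intends: the corollary is stated as an immediate consequence of Theorem~\ref{th22}, with the $j=1$ term dominating the finite sum of $n$-th powers. Your additional care in checking that the negative bases (e.g.\ $\lambda_k=1-2\cos\bigl(\frac{\pi}{k+1}\bigr)$ for $k\ge3$) cannot rival $\lambda_1=1+2\cos\bigl(\frac{\pi}{k+1}\bigr)$ in modulus is exactly the detail the paper leaves implicit, and you handle it correctly.
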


Note that, asymptotically, smooth and cyclic smooth words have the same exponential growth order, just a different constant. More precisely we may deduce

\begin{corollary}
The proportion of smooth words that are cyclic smooth in $[k]^n$ tends to $\frac{1}{2} (k+1) \left(2 \cos \bigl(\frac{\pi }{k+1}\bigr)+1\right)
   \tan ^2\bigl(\frac{\pi }{2( k+1)}\bigr)$ as $n \to \infty$.
\end{corollary}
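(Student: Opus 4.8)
The plan is to express the stated proportion as the quotient $\textit{scw}_{n,k}/\textit{sw}_{n,k}$ and feed in the two asymptotic corollaries already established for smooth cyclic words and smooth words, respectively. The key observation is that both asymptotics have the \emph{same} dominant exponential base $1+2\cos\frac{\pi}{k+1}$ (the $j=1$ term dominates in each trigonometric sum, which is precisely what the two preceding corollaries record), so in the ratio all the $n$-dependence cancels and the limit is a pure constant depending only on $k$. No fresh analysis is needed; this is a one-line consequence of results stated earlier.

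Concretely, I would first write
$$
\frac{\textit{scw}_{n,k}}{\textit{sw}_{n,k}}
\sim
\frac{\bigl(1+2\cos\frac{\pi}{k+1}\bigr)^{n}}
{\frac{2}{k+1}\cot^2\frac{\pi}{2(k+1)}\bigl(1+2\cos\frac{\pi}{k+1}\bigr)^{n-1}}
\qquad (n\to\infty),
$$
substituting the equivalent for $\textit{scw}_{n,k}$ in the numerator and the one for $\textit{sw}_{n,k}$ in the denominator. The next step is simply to cancel the common factor $\bigl(1+2\cos\frac{\pi}{k+1}\bigr)^{n-1}$: because the exponents are $n$ above and $n-1$ below, exactly one factor of $1+2\cos\frac{\pi}{k+1}$ survives in the numerator, and the entire $n$-dependence disappears, confirming that the limit exists and is constant.

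Finally I would collect the constants. Moving the denominator $\frac{2}{k+1}\cot^2\frac{\pi}{2(k+1)}$ up and using $1/\cot^2\theta=\tan^2\theta$, the surviving expression is
$$
\frac{(k+1)}{2}\Bigl(1+2\cos\tfrac{\pi}{k+1}\Bigr)\tan^2\tfrac{\pi}{2(k+1)},
$$
which is exactly the claimed value $\frac12(k+1)\bigl(2\cos\frac{\pi}{k+1}+1\bigr)\tan^2\frac{\pi}{2(k+1)}$.

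I do not anticipate a genuine obstacle here, since the heavy lifting is done by the two asymptotic corollaries. The only point demanding a moment's care is the exponent bookkeeping: the smooth-word asymptotic carries the exponent $n-1$ while the cyclic one carries $n$, and it is precisely this mismatch that produces the extra factor $1+2\cos\frac{\pi}{k+1}$ rather than a complete cancellation. One should also note, for rigor, that the dominance of the $j=1$ term (guaranteeing that both ratios are governed by this single base) is already built into the statements of the corollaries being invoked, so no separate justification of the leading-term selection is required.
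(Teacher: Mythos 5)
Your proof is correct and is exactly the argument the paper intends: the corollary follows by taking the ratio of the two preceding asymptotic corollaries, $\textit{scw}_{n,k}\sim\bigl(1+2\cos\frac{\pi}{k+1}\bigr)^{n}$ and $\textit{sw}_{n,k}\sim\frac{2}{k+1}\cot^2\frac{\pi}{2(k+1)}\bigl(1+2\cos\frac{\pi}{k+1}\bigr)^{n-1}$, and your exponent bookkeeping (the surviving single factor of $1+2\cos\frac{\pi}{k+1}$) and the conversion $1/\cot^2\theta=\tan^2\theta$ reproduce the stated constant precisely.
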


We observe that for large $k$,
\[\frac{1}{2} (k+1) \left(2 \cos \Bigl(\frac{\pi }{k+1}\Bigr)+1\right)
   \tan ^2\Bigl(\frac{\pi }{2 (k+1)}\Bigr)=\frac{3 \pi ^2}{8 k}-\frac{3\pi ^2}{8k^2}
   +O(k^{-3}).
\]

\section{Smooth necklaces} \label{smoothneck}

Smooth necklaces were defined in the introduction of the paper.
To count the number $\textit{sn}_{n,k}$ of smooth necklaces of length $n$ over an alphabet of $k$ letters we consider equivalence classes of smooth cyclic words up to rotation.
From Theorem~\ref{th2} we then obtain the following result by a direct application of Theorem~\ref{th2} and \cite[Exercise 7.112(a)]{St}.

\begin{theorem}\label{th3}
Let $n\geq1$. The number $\textit{sn}_{n,k}$ of smooth necklaces of length $n$ over an alphabet of $k$ letters is given by
$$
\textit{sn}_{n,k}=\frac{1}{n}\sum\limits_{i=1}^k\sum_{j\mid n}\phi(j)\left[1+2\cos\Bigl(\frac{i\pi}{k+1}\Bigr)\right]^{n/j}
$$
where $\phi$ is Euler's totient function ($\phi(n)$ is the number of positive integers $\le n$ that are relatively prime to $n$), and we write $j\mid n$ if $j$ divides $n$.
\end{theorem}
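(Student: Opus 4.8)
The plan is to realize $\textit{sn}_{n,k}$ as the number of orbits of the set of smooth cyclic words of length $n$ under the action of the cyclic group $C_n$ of rotations, and then apply the Cauchy--Frobenius (Burnside) counting lemma, which is exactly the content of \cite[Exercise 7.112(a)]{St}. The first thing I would record is that the defining conditions of a smooth cyclic word are genuinely \emph{circular}: a word $\sigma_1\cdots\sigma_n$ is smooth cyclic precisely when every pair of cyclically adjacent letters $(\sigma_i,\sigma_{i+1})$, with indices read modulo $n$, differs by at most $1$. Phrased this way it is immediate that the set of smooth cyclic words of length $n$ is closed under rotation, so $C_n$ really does act on it and counting orbits is legitimate.

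The heart of the argument is the computation of the number of fixed points of each rotation. Let $r$ denote rotation by one position; a word $\sigma$ is fixed by $r^s$ iff $\sigma_i=\sigma_{i+s}$ for all $i$, which forces $\sigma$ to be periodic with period $g=\gcd(s,n)$, that is, $\sigma=(\sigma_1\cdots\sigma_g)^{n/g}$. I would then establish the key claim: such a periodic $\sigma$ is smooth cyclic of length $n$ if and only if the block $\sigma_1\cdots\sigma_g$ is smooth cyclic of length $g$. This holds because, under periodicity, the $n$ cyclically adjacent pairs of $\sigma$ are exactly the $g$ cyclically adjacent pairs of the block, each repeated $n/g$ times; in particular the wrap-around constraint $|\sigma_1-\sigma_n|\le 1$ for $\sigma$ coincides with the wrap-around constraint $|\sigma_1-\sigma_g|\le 1$ for the block. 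Consequently the number of smooth cyclic words of length $n$ fixed by $r^s$ equals $\textit{scw}_{\gcd(s,n),k}$.

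With the fixed-point count in hand, Burnside's lemma gives
$$\textit{sn}_{n,k}=\frac1n\sum_{s=0}^{n-1}\textit{scw}_{\gcd(s,n),k}=\frac1n\sum_{g\mid n}\#\{s:\gcd(s,n)=g\}\,\textit{scw}_{g,k}=\frac1n\sum_{g\mid n}\phi(n/g)\,\textit{scw}_{g,k},$$
using the standard fact that exactly $\phi(n/g)$ residues $s\in\{0,\dots,n-1\}$ satisfy $\gcd(s,n)=g$. Reindexing by $j=n/g$ turns this into $\frac1n\sum_{j\mid n}\phi(j)\,\textit{scw}_{n/j,k}$, and substituting the closed form $\textit{scw}_{m,k}=\sum_{i=1}^k\bigl(1+2\cos\frac{i\pi}{k+1}\bigr)^{m}$ from Theorem~\ref{th22} and interchanging the two sums yields precisely the claimed expression.

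I expect the only genuine obstacle to be the periodicity claim in the second paragraph: everything else is the routine Burnside bookkeeping packaged in the cited exercise, whereas the reduction of smooth cyclicity of $\sigma$ to smooth cyclicity of its fundamental block is where the specific structure of the problem enters, and it is exactly what makes $\textit{scw}$ (rather than some unrelated count) appear in the fixed-point formula. The circular reformulation recorded in the first paragraph is what renders this claim transparent.
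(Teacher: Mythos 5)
Your proposal is correct and is essentially the paper's own argument: the paper proves Theorem~\ref{th3} by citing \cite[Exercise 7.112(a)]{St}, whose content is exactly the Burnside fixed-point computation you carry out (a rotation $r^s$ fixes precisely the smooth cyclic words that are periodic with period $\gcd(s,n)$, giving $\textit{scw}_{\gcd(s,n),k}$ fixed points), combined with the explicit formula $\textit{scw}_{m,k}=\sum_{i=1}^k\bigl(1+2\cos\frac{i\pi}{k+1}\bigr)^{m}$ of Theorem~\ref{th22}. In effect you have written out, self-contained, the step the paper delegates to the cited exercise.
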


We see from this that asymptotically as $n \to \infty$, $\textit{sn}_{n,k}\sim \textit{scw}_{n,k}$.

The following table (Table~\ref{tab2}) is obtained from Theorem~\ref{th3}.
\begin{table}[htp]
\begin{tabular}{c|cccccccccccc}
  $n$        & 0 & 1 & 2 & 3 & 4 & 5 & 6 & 7  & 8  & 9  & 10  & 11  \\\hline\hline
  $\textit{sn}_{n,1}$ & 1 & 1 & 1 & 1 & 1 & 1 & 1  & 1  & 1  & 1   &  1 &  1  \\\hline
  $\textit{sn}_{n,2}$ & 1 & 2 & 3 & 4 & 6 & 8 & 14 & 20 & 36 & 60  & 108& 188 \\\hline
  $\textit{sn}_{n,3}$ & 1 & 3 & 5 & 7 & 12& 19& 39 & 71 & 152& 315 & 685&1479 \\\hline
  $\textit{sn}_{n,4}$ & 1 & 4 & 7 & 10& 18& 30& 65 & 128& 293& 658 & 1544&3622\\\hline
  $\textit{sn}_{n,5}$ & 1 & 5 & 9 & 13& 24& 41& 91 & 185& 435& 1009& 2445&5945\\\hline
  $\textit{sn}_{n,6}$ & 1 & 6 & 11& 16& 30& 52& 117& 242& 577& 1360& 3347&8278\\\hline
  $\textit{sn}_{n,7}$ & 1 & 7 & 13& 19& 36& 63& 143& 299& 719& 1711& 4249&10611\\\hline
\end{tabular}\vspace{7pt}
\caption{Numbers of smooth necklaces $\textit{sn}_{n,k}$.}\label{tab2}
\end{table}

\bibliographystyle{plain}

\end{document}